\theoremstyle{plain}
\newtheorem{theorem}{Theorem}[section]
\theoremstyle{definition}
\newtheorem{definition}[theorem]{Definition}
\newtheorem{example}[theorem]{Example}
\newtheorem{assumption}{Assumption}[section]
\theoremstyle{remark}
\newtheorem{remark}{Remark}[section]
\begin{document}


\title{Delayed Impulsive Stabilization of Discrete-Time Systems:\\ A Periodic Event-Triggering Algorithm }

\author{
\name{Kexue Zhang\textsuperscript{a}\thanks{CONTACT Kexue Zhang. Email: kexue.zhang@queensu.ca} and Elena Braverman\textsuperscript{b}}
\affil{\footnotesize \textsuperscript{a}Department of Mathematics and Statistics, Queen's University, Kingston, Ontario K7L 3N6, Canada\\ \textsuperscript{b}Department of Mathematics and Statistics, University of Calgary, Calgary, Alberta T2N 1N4, Canada}
}

\maketitle

\begin{abstract}
This paper studies the problem of event-triggered impulsive control for discrete-time systems. A novel periodic event-triggering scheme with two tunable parameters is presented to determine the moments of updating impulsive control signals which are called event times. Sufficient conditions are established to guarantee asymptotic stability of the resulting impulsive systems. It is worth mentioning that the event times are different from the impulse times, that is, the control signals are updated at each event time but the actuator performs the impulsive control tasks at a later time due to time delays. The effectiveness of our theoretical result with the proposed scheme is illustrated by three examples.
\end{abstract}

\begin{keywords}
Discrete-time system; time delay; periodic event-triggered control; impulsive control; asymptotic stability 
\end{keywords}

\section{Introduction}

Discrete-time systems are frequently encountered and widely used in various areas (see, e.g., \cite{KO:1995,HB-BGK:1993,JS:2006}), such as digital control, digital signal processing and communication, and optimization. The method of impulsive control activates the control inputs to discrete-time systems only at some ideal discrete-time moments, instead of every time step continuously, and maintains zero inputs between two consecutive control executions. The advantage of impulsive control is to minimize the energy consumption of executing the control tasks. The impulsive control inputs are normally called \textit{impulses} and the associated control-execution moments are refereed as \emph{impulse times}. Recent years have witnessed increasing interest in the study of impulsive control for discrete-time systems (see, e.g., \cite{XL-KZ:2019,ZHG-NL:2010,WHC-XL-WXZ:2015}).

Most of the existing results on impulsive control for discrete-time systems are based on time-triggered impulses (see, e.g., \cite{YZ-JS-GF:2009,BL-DJH-ZS:2018,BL-HJM:2007,BL-DJH:2014}), that is, the impulses are triggered by a clock. More precisely, the impulses are pre-scheduled and independent of the real-time system states. Recently, self-triggered impulsive control has been investigated in \cite{TMPG-WPMH:2015,EH-DEQ-HS-KHJ:2012}, where at each impulse time the next one is determined based on the information available at that impulse time. On the other hand, event-triggered impulsive control only activates the impulse inputs when needed, and the updating of the impulsive control inputs is triggered by an event that occurs when a certain measurement of system states violates a well-designed threshold. Hence, event-triggered impulsive control is expected to be more effective in terms of the efficiency improvement on control implementations when compared with both the time-triggered impulsive control and the conventional event-triggered control which works in a sample-and-hold (or zero-order-hold) fashion (see, e.g., \cite{AE-VD-KK:2010}), and also inherits the advantage of self-triggered impulsive control in running the system open-loop between impulse times (see \cite{WPMHH-KHJ-PT:2012} for a detailed discussion on both self-triggered and event-triggered control).

Nevertheless, very few results have been reported on event-triggered impulsive control for discrete-time systems (see, e.g., \cite{BL-DJH-CZ-ZS:2018,BL-DJH-ZS-JH:2019,HL-JF-XL-LR-TH:2020}). { A major difficulty in the study of event-triggered impulsive control for discrete-time systems is to ensure the consecutive impulsive controls are separated by at least two time steps so that the event-triggered impulsive control can be distinguished from the conventional feedback control, and then the advantage of event-triggered control method on saving the energy consumption due to control updating can be preserved.} Stabilization of nonlinear discrete-time systems was studied in~\cite{BL-DJH-CZ-ZS:2018} via event-triggered impulsive control, and time delays are considered in the impulse inputs but not in the actuator and sensor pair. Moreover, the impulsive control method in~\cite{BL-DJH-CZ-ZS:2018} is to reset system states at each impulse time, which is different from the conventional feedback control approach, and hence different from the impulsive control method investigated in this research. Exponential stabilization of discrete-time systems with time delays was investigated in~\cite{BL-DJH-ZS-JH:2019}, and the impulse times are determined by a novel event-triggering algorithm which has enforced lower and upper bounds for the inter-event times. Recently, this idea of event-triggering mechanism was generalized to deal with the impulsive synchronization for discrete-time coupled neural networks with stochastic perturbations and multiple delays in~\cite{HL-JF-XL-LR-TH:2020}. It should be clarified that, in all the above mentioned results, an upper bound of the inter-event times is prescribed which makes the result conservative in the sense that more control updates are potentially triggered than needed; multiple levels of events need to be detected at every time step in order to determine the time for control updates; the event times coincide with all the impulse times, that is, no time delays between the controller and actuator pair are considered within these event-triggering schemes. It can be seen that the study of event-triggered impulsive control for discrete-time systems is undergoing early-stage investigation which motivates this research.

Inspired by the above discussion and the triggering condition with experiential convergent threshold in \cite{MM-AA-PT:2010,RP-PT-DN-AA:2014}, we design a new periodic event-triggered impulsive control method for discrete-time systems. Due to the communication delays between the sensor and actuator, the event times when to update the control signals are different from the impulse times. Based on a Lyapunov function, we design a periodic event-triggering algorithm to determine the times for control updating, that is, the system states are detected periodically (not necessarily at every discrete moment), and the impulsive control signals are updated once the Lyapunov function exceeds a time-dependent threshold at some periodic sampling moment. The impulsive control mechanism with the proposed event-triggering algorithm ensures the consecutive event times are separated by at least two units of discrete times, which distinguishes the event-triggered impulsive control from the feedback control that requires the control actuation at every discrete-time moment. Compared with the existing results on event-triggered impulsive control (see, \cite{BL-DJH-CZ-ZS:2018,BL-DJH-ZS-JH:2019,HL-JF-XL-LR-TH:2020}), the proposed algorithm is simple to implement as our event-triggering condition only requires the information of the Lyapunov function at the periodic sampling times, and time delays are considered in the proposed event-triggered impulsive control method. {Compared with the recent results on event-triggered control for discrete-time delay systems in \cite{KZ-EB-BG:2023}, the control inputs between consecutive control updates are zeros in the proposed event-triggered impulsive control method, while the event-triggered control mechanism in \cite{KZ-EB-BG:2023} requires the control inputs remain unchanged, most likely nonzero, between event times. Therefore, the event-triggered impulsive control method has the advantage over the event-triggered feedback control method in saving the energy consumption due to execution of control tasks.}

The rest of the paper is organized as follows. We formulate the control problem and propose our event-triggering scheme in Section~\ref{Sec2}. The main result is introduced in Section~\ref{Sec3}. Three examples are presented in Section~\ref{Sec5}. Finally, conclusions are drawn in Section~\ref{Sec6}.

\textbf{Notation.} Let $\mathbb{Z}$ denote the set of integers, $\mathbb{Z}^+$ the set of nonnegative integers, $\mathbb{N}$ the set of positive integers, $\mathbb{R}$ the set of real numbers, $\mathbb{R}^+$ the set of nonnegative reals, and $\mathbb{R}^n$ the $n$-dimensional real space equipped with the Euclidean norm denoted by $\|\cdot\|$. For an $n\times n$ matrix $A$, we use $\|A\|$ to represent its induced matrix norm. Let $D=\textrm{diag}(d_1,d_2,...,d_n)$ denote the diagonal $n\times n$ matrix with diagonal entries $d_1,d_2,...,d_n$. A continuous function $\gamma:\mathbb{R}^+\rightarrow\mathbb{R}$ is said to be of class $\mathcal{K}$ and we write $\gamma\in\mathcal{K}$, if $\gamma$ is strictly increasing and equals to zero at zero. For a function $\gamma\in\mathcal{K}$, we let $\gamma^{-1}$ represent the inverse function of $\gamma$. Let $\delta:\mathbb{Z}\rightarrow \mathbb{Z}$ denote the discrete-time unit sample (or unit impulse) function defined as
\begin{eqnarray}
\delta[k]=\left\{\begin{array}{ll}
1, \textrm{~if~} k=0; \cr
0, \textrm{~otherwise}.\nonumber
\end{array}\right.
\end{eqnarray}
Given $R>0$, $\mathcal{B}(R)$ denotes the open ball in $\mathbb{R}^n$ centered at the origin with radius $R$, that is, $\mathcal{B}(R)=\{x\in\mathbb{R}^n: \|x\|<R\}$.

\section{Problem Formulation}\label{Sec2}

Consider the following discrete-time control system:
\begin{eqnarray}\label{sys}
\left\{\begin{array}{ll}
x(k+1)=f(x(k),u(k)) \cr
x(0)=x_0
\end{array}\right.
\end{eqnarray}
where $k\in\mathbb{Z}^+$, $x(k)\in\mathbb{R}^n$ is the system state, $x_0\in\mathbb{R}^n$ is the initial state, and $u: \mathbb{Z}^+\rightarrow \mathbb{R}^m$ is the control input. The nonlinear function $f:\mathbb{R}^n\times\mathbb{R}^m \rightarrow\mathbb{R}^n$ satisfies $f(0,0)=0$, then system \eqref{sys} admits the trivial solution.

\begin{definition}[Stability~{\citep{KO:1995}}]
The trivial solution of system \eqref{sys} is said to be 
\begin{itemize}
\item \textbf{stable}, if, for any $\varepsilon>0$, there exists a positive constant $\sigma:=\sigma(\varepsilon)$, such that $\|x_0\|<\sigma \Rightarrow\|x(k)\|< \varepsilon$ for all $k\geq 0$;

\item \textbf{asymptotically stable}, if the trivial solution of system \eqref{sys} is stable, and there exists a constant $\sigma>0$ such that $\|x_0\|<\sigma \Rightarrow \lim_{k\rightarrow\infty}\|x(k)\|=0$,
\end{itemize}
where $x(k)$ is the solution of system~\eqref{sys}.
\end{definition}

In this study, we consider the following state feedback control 
\begin{equation}\label{pulse}
u(k)=\sum_{i\in\mathbb{N}} \mathbf{k}(x(k_i)) \delta[k-(k_i+\Gamma)]
\end{equation}
where $\mathbf{k}:\mathbb{R}^n\rightarrow \mathbb{R}^m$ is the feedback control law with $\mathbf{k}(0)=0$, and $\Gamma\geq 0$ is the time delay. The times $\{k_i\}_{i\in\mathbb{N}}$ are the moments when control $u$ is updated and are to be determined by a certain event that occurs when the measurement of the system states violates a triggering condition to be designed later. It can be seen from control~\eqref{pulse} that the control input is $\mathbf{k}(x(k_i))$ at the impulse time $k_i+\Gamma$ for $i\in\mathbb{N}$; otherwise, the input is zero. Hence, feedback control~\eqref{pulse} is normally called \textit{impulsive control}. Closed-loop system~\eqref{sys} with impulsive control~\eqref{pulse} can be written as a discrete-time impulsive system:
\begin{eqnarray}\label{CLsys}
\left\{\begin{array}{ll}
x(k+1)=g(x(k)),  ~k\not= k_i+\Gamma \cr
x(k_i+\Gamma+1)=f\left(x(k_i+\Gamma),\mathbf{k}(x(k_i))\right),  ~i\in\mathbb{N} \cr
x(0)=x_0 
\end{array}\right.
\end{eqnarray}
where $g(x):=f(x,0)$ for $x\in\mathbb{R}^n$. To introduce our event-triggering algorithm, we make the following assumption on system~\eqref{CLsys}.

\begin{assumption}\label{Assumption1}
There exist functions $V:\mathbb{R}^n\rightarrow \mathbb{R}^+$, $\alpha,\beta\in\mathcal{K}$, and constants $c\geq 1$, $\rho>0$ such that, for any $x\in\mathbb{R}^n$, the following conditions are satisfied
\begin{itemize}
\item[$(A1)$] $\alpha(\|x\|)\leq V(x)\leq \beta(\|x\|)$;

\item[$(A2)$] $V(g(x))\leq c V(x)$;

\item[$(A3)$] $V(f(g^{\Gamma}(x),\mathbf{k}(x)))\leq \rho V(x)$ where $g^{\Gamma}:=\underbrace{g\circ g \circ ... \circ g}_\mathrm{\Gamma~\text{times}}$ is the $\Gamma$ times composition of function $g$.

\end{itemize}
\end{assumption}

\begin{remark}
When $k\not=k_i+\Gamma$, condition (A2) describes the dynamics of the uncontrolled system. If $c<1$, then the uncontrolled system is asymptotically stable. Thus, we only consider the case of $c\geq 1$ in (A2). Condition (A3) characterizes the impulse effect on the Lyapunov function $V$. To be more specific, the relation between the values of $V$ at event time $k_i$ and time $k_i+\Gamma+1$ after the impulse time is quantified by constant $\rho$. See Section~\ref{Sec5} for demonstrations of how to derive $\rho$. {Local Lipschitz conditions on $f$, $g$, $\mathbf{k}$, and $V$ will ensure that both $V(g)$ and $V(f(g^{\Gamma},\mathbf{k}))$ are locally Lipschitz continuous in $x$, and then Assumption~\ref{Assumption1} can be satisfied on a compact set so that Theorem~\ref{Th1} can be applied.}
\end{remark}

Now we are in the position to introduce the event trigger. The event time sequence $\{k_i\}_{i\in\mathbb{N}}$ is determined by the following trigger
\begin{align}\label{trigger}
k_{i+1}
=&\left\{\begin{array}{ll}
\inf\{j\Delta> 0:V(x(j\Delta))>a(1-b)^{j\Delta}\},&\textrm{~if~} i=0 \cr
\inf\{j\Delta> k_i+\Gamma :V(x(j\Delta))>a(1-b)^{j\Delta}\},&\textrm{~otherwise}\cr
\end{array}\right.
\end{align}
where $j\in\mathbb{N}$, $a$ and $b$ are positive constants with $b<1$, and $\Delta\in\mathbb{N}$ is the sampling period. It can be observed from trigger~\eqref{trigger} that the \textit{event} 
\[
V(x(k))>a(1-b)^k
\]
is only detected at the sequence of sampling times $\{j\Delta\}_{i\in\mathbb{N}}$ instead of the entire time span, that is, the event is detected periodically. Thus, trigger~\eqref{trigger} is called a \textit{periodic event trigger}. Nevertheless, the event is detected at every discrete moment if $\Delta=1$. With trigger~\eqref{trigger}, impulsive control~\eqref{pulse} works as follows. For any initial state $x_0\in \mathbb{R}^n$ satisfying $V(x_0)<a$, event time $k_1$ is the sampling moment when the graph of $V(x(k))$ goes above the threshold line $a(1-b)^k$. The control $u$ is updated at event time $k_1$ and then executed at the impulse time $k_1+\Gamma$, due to time delay $\Gamma$. The purpose of the impulsive control is to bring the value of $V$ at time $k_1+\Gamma+1$ down below the threshold line. Then, the next event time $k_2$ is the sampling time when $V$ surpasses the threshold again. The above mentioned process is repeated as long as the value of $V$ goes beyond the threshold line at the sampling times. See Figure~\ref{Mechanism} for the demonstration of the proposed event-triggered impulsive control mechanism. It can be seen that, to ensure the validity of event trigger~\eqref{trigger}, it is necessary to guarantee that the value of $V$ is not bigger than the threshold after each impulse.

\begin{remark} Time delay $\Gamma$ in the impulses can be understood as follows. An event is detected at time
$k_i$ and the corresponding measurement is $x(k_i)$, which arrives to the controller at
time $k_i + \Gamma_1$ (where $\Gamma_1$ the \textit{sensor-controller} delay), and due to the existence of \textit{controller-actuator} delay $\Gamma_2$,
the control input $u(x(k_i))$ is applied to the plant at time $k_i+\Gamma$, where $\Gamma=\Gamma_1+\Gamma_2$, i.e., the sum of {sensor-controller} delay and {controller-actuator} delay. Moreover, the analysis in this study is also applicable to time-varying delays in the impulse with $\Gamma$ as the upper bound of all these time-varying delays.
\end{remark}

\begin{remark} In this study, we extend the idea of event-triggered impulsive control for continuous-time systems in~\cite{KZ-EB:2022} to deal with stabilization of discrete-time systems. However, the impulsive control method in this research is different from that in ~\cite{KZ-EB:2022} in the following sense. For continuous-time systems, the impulsive control inputs are unbounded and lead to state jumps at each impulse time. The impulsive control for discrete-time systems is a typical feedback control, and the control inputs are finite at impulse times and zeros at the non-impulse times. { Another difference lies on the requirement of the event times. For continuous-time systems, it is essential to ensure the inter-event times are lower bounded by a positive quantity so that Zeno behavior, a phenomenon of infinite many impulses over a finite time interval, will not occur. For discrete-time systems, Zeno behavior can be naturally excluded because of the discrete-time dynamic evolution. However, to distinguish from the conventional feedback control, the inter-event times for discrete-time control systems should be bounded by at least two time steps from below. To ensure such a lower bound for the inter-event times is one of the major challenges in the study of event-triggered control problems for discrete-time systems. In the following section, sufficient conditions will be established to guarantee the lower bound of the inter-event times is not less than $\Gamma+2$.} 
\end{remark}

\begin{figure}[t!]
\centering
\includegraphics[width=3.2in]{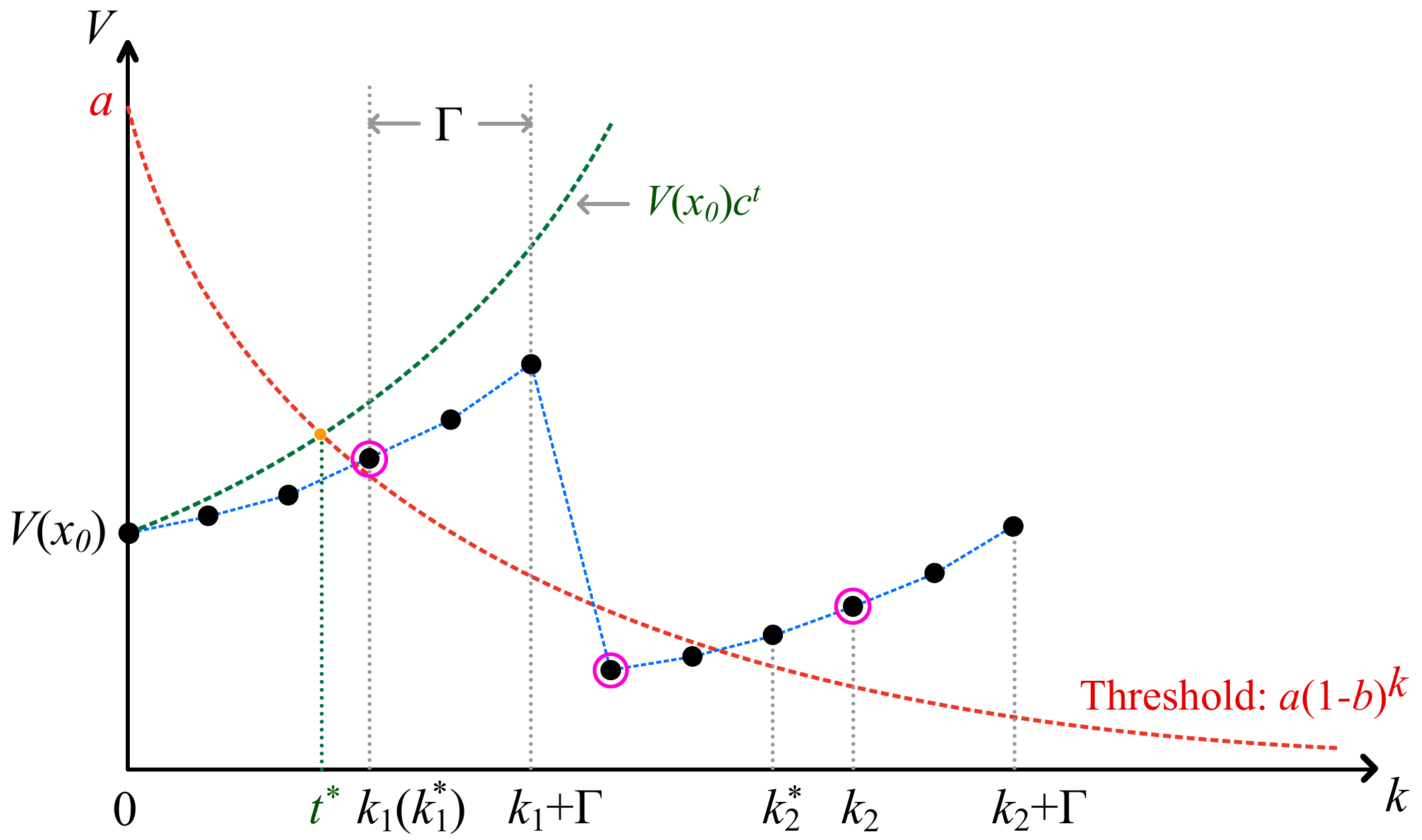}
\caption{A mechanism demonstration of the proposed event-triggered impulsive control with $\Delta=3$ and $\Gamma=2$. The black dots represent the Lyapunov function $V$ at the discrete moments in the interval $[0,k_2+\Gamma]$, which are traced by the blue dotted line. The values of $V$ at the sampling times are indicated by magenta circles. Graphs of functions $V(x_0)c^t$ and $a(1-b)^t$ for $t\in\mathbb{R}^+$ intersect at $t^*$ which plays an important role in the proof of Theorem~\ref{Th1} on stability of system~\eqref{CLsys}. From the definition of $k^*_i$ in the proof of Theorem~\ref{Th1}, we can observe that $k^*_1=k_1$ and $k^*_2=k_2-1$ in this demonstration.}
\label{Mechanism}
\end{figure}

\section{Main result}\label{Sec3}
In this section, we introduce the main result to ensure the validity of trigger~\eqref{trigger} and asymptotic stability of closed-loop system~\eqref{CLsys}.

\begin{theorem}\label{Th1}
Suppose Assumption~\ref{Assumption1} holds for system~\eqref{CLsys}, and the sequence of event times $\{k_i\}_{i\in\mathbb{N}}$ is determined by trigger~\eqref{trigger}. If
\begin{equation}\label{condition}
\frac{\rho c^{\Delta}}{(1-b)^{\Delta+\Gamma+1}}\leq 1
\end{equation}
then, for any initial state $x_0\in \mathcal{B}\left(\beta^{-1}(a)\right)$, the trivial solution of system~\eqref{CLsys} is asymptotically stable. Furthermore, the inter-event times  $\{k_{i+1}-k_i\}_{i\in\mathbb{N}}$ are lower bounded by $\Gamma+2$, that is, $k_{i+1}-k_i \geq \Gamma+2$ for all $i\in\mathbb{N}$.
\end{theorem}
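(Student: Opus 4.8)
The plan is to run a Lyapunov comparison against the decaying threshold $a(1-b)^k$, combining the three estimates of Assumption~\ref{Assumption1} with the trigger rule \eqref{trigger}, and to prove by induction on the event index $i$ the central invariant that the Lyapunov function is pushed back below the threshold immediately after every impulse, i.e.
\[
V(x(k_i+\Gamma+1))\le a(1-b)^{k_i+\Gamma+1},\qquad i\in\mathbb{N}.
\]
For the base step I would observe that $x_0\in\mathcal{B}(\beta^{-1}(a))$ gives $V(x_0)\le\beta(\|x_0\|)<a=a(1-b)^0$ by (A1), so the trajectory starts strictly below the threshold and \eqref{trigger} is well posed. The inductive step is where condition \eqref{condition} is consumed.

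Between the post-impulse instant $k_{i-1}+\Gamma+1$ (or the origin, when $i=1$) and the event time $k_i$, system \eqref{CLsys} runs open-loop, so iterating (A2) yields $V(x(k))\le c^{\,k-\tau}V(x(\tau))$ for any reference $\tau$ in this window. The crux is to locate a $\tau$ at which $V$ is known to lie below the threshold and which is at most $\Delta$ steps before $k_i$: I would take $\tau=k_i-\Delta$ whenever this sampling instant post-dates the previous impulse, for then it did not trigger and $V(x(\tau))\le a(1-b)^{\tau}$; otherwise $k_i\le k_{i-1}+\Gamma+\Delta$ and I would take $\tau=k_{i-1}+\Gamma+1$, which the inductive hypothesis places below the threshold and which now lies within $\Delta$ steps of $k_i$. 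In both cases $0\le k_i-\tau\le\Delta$ and $\tau\ge k_i-\Delta$, so $c\ge1$ and $0<1-b<1$ give the uniform estimate $V(x(k_i))\le c^{\Delta}a(1-b)^{\,k_i-\Delta}$. Writing $x(k_i+\Gamma)=g^{\Gamma}(x(k_i))$ and applying (A3) then produces $V(x(k_i+\Gamma+1))\le\rho V(x(k_i))\le\rho c^{\Delta}a(1-b)^{\,k_i-\Delta}$, and condition \eqref{condition} is exactly what rewrites the right-hand side as $a(1-b)^{k_i+\Gamma+1}$, closing the induction.

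The inter-event lower bound is then immediate: \eqref{trigger} forces $k_{i+1}>k_i+\Gamma$, while the invariant just proved shows the event $V(x(k))>a(1-b)^{k}$ fails at $k=k_i+\Gamma+1$, so the first admissible sampling instant is $k_i+\Gamma+2$ or later, giving $k_{i+1}-k_i\ge\Gamma+2$. For attractivity I would upgrade the pointwise bounds into one decaying envelope: every $k$ admits a below-threshold reference $\tau$ with $0\le k-\tau\le\Gamma+\Delta$, the worst case being a peak $k=k_i+\Gamma$ just before an impulse, whose nearest reference is the non-triggering sample $k_i-\Delta$. This gives $V(x(k))\le(c/(1-b))^{\Gamma+\Delta}a(1-b)^{k}\to0$, and by (A1) together with $\alpha\in\mathcal{K}$ it forces $\|x(k)\|\to0$.

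Stability does not follow from that envelope, since its constant is proportional to $a$ and does not shrink with $x_0$; here I would exploit the intersection time $t^{*}$ of $V(x_0)c^{t}$ and $a(1-b)^{t}$ from Figure~\ref{Mechanism}. Open-loop growth keeps $V(x(k))\le c^{k}V(x_0)$ below the threshold for $k<t^{*}$, hence $k_1\ge t^{*}$; feeding this into the peak estimate $V(x(k_1+\Gamma))\le c^{\Gamma+\Delta}a(1-b)^{\,k_1-\Delta}$ expresses the first (hence global) peak as a quantity $C\,V(x_0)^{1-\theta}$ with $\theta=\ln c/\ln(c/(1-b))\in[0,1)$, which tends to $0$ with $\|x_0\|$; choosing $\|x_0\|$ so that this peak stays below $\alpha(\varepsilon)$ yields the required $\sigma(\varepsilon)$. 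I expect the main obstacle to be the bookkeeping in the estimate on $V(x(k_i))$: because events are detected only on the period-$\Delta$ grid while the impulse acts $\Gamma$ steps later, one must argue carefully that a below-threshold reference always lies within $\Delta$ (respectively $\Gamma+\Delta$) steps, handling the case in which the sample $k_i-\Delta$ falls on or before the preceding impulse. This is precisely where the auxiliary instants $k_i^{*}$ of Figure~\ref{Mechanism} are convenient and where \eqref{condition} must be applied with the exponents matched.
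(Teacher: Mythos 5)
Your proposal is correct and follows essentially the same route as the paper's proof: the post-impulse invariant $V(x(k_i+\Gamma+1))\le a(1-b)^{k_i+\Gamma+1}$ derived from (A2), (A3) and condition~\eqref{condition}, the decaying envelope $\left(c/(1-b)\right)^{\Gamma+\Delta}a(1-b)^k$ for attractivity, the intersection time $t^*$ giving a bound proportional to $V(x_0)^{1-\theta}$ for stability, and the same observation for the lower bound $k_{i+1}-k_i\ge\Gamma+2$. The only difference is bookkeeping: you anchor the key estimate at the sample $k_i-\Delta$ or at the post-impulse instant $k_{i-1}+\Gamma+1$ inside an explicit induction, whereas the paper anchors it at the last below-threshold instant $k_i^*-1$; both choices give a reference within $\Delta$ steps of $k_i$ and yield the identical chain of inequalities.
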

\begin{proof} Let $x(k):=x(k,0,x_0)$ denote the solution of system~\eqref{sys}. 
From trigger~\eqref{trigger}, we can see that $k_1$ is the first sampling time so that $V(x(k))>a(1-b)^k$. Then we can conclude that there exists a $k^*_1\in (k_1-\Delta,k_1]$ such that
\begin{equation}\label{k*1-1}
V(x(k^*_1))>a(1-b)^{k^*_1},
\end{equation}
\begin{equation}\label{k*1-2}
V(x(k^*_1-1))\leq a(1-b)^{k^*_1-1},
\end{equation}
and
\begin{equation}\label{k*1-3}
V(x(k))>a(1-b)^{k} \textrm{~for~all~} k\in[k^*_1,k_1],
\end{equation}
that is, 
$$k^*_1=\max\left\{k\in (k_1-\Delta,k_1]: V(x(k))\leq a(1-b)^k\right\}+1.$$
 See Figure~\ref{Mechanism} for a demonstration of $k^*_1$.

For $k=k_1+\Gamma+1$, we have
\begin{align}\label{afterpulse1}
 V(x(k_1+\Gamma+1))
=& V\left(f(x(k_1+\Gamma),\mathbf{k}(x(k_1)))\right) \cr
=& V\left(f\left(g^{\Gamma}(x(k_1)),\mathbf{k}(x(k_1))\right)\right) \cr
\leq & \rho V(x(k_1)) \cr
\leq & \rho c^{k_1-(k^*_1-1)} V(x(k^*_1-1))\cr
\leq & \rho c^{\Delta} a (1-b)^{k^*_1-1} \frac{(1-b)^{k_1+\Gamma+1-(k^*_1-1)}}{(1-b)^{k_1+\Gamma+1-(k^*_1-1)}}\cr
\leq & \frac{\rho c^{\Delta}}{(1-b)^{\Delta+\Gamma+1}} a (1-b)^{k_1+\Gamma+1}
\end{align}
where we used the dynamics of system~\eqref{CLsys} when $k=k_1+\Gamma$ in the first equality, and then the dynamics from time $k_1$ to $k_1+\Gamma$ in the second equality. The first and second inequalities of~\eqref{afterpulse1} follow from (A3) and (A2) of Assumption~\ref{Assumption1}, respectively. The last two inequalities of~\eqref{afterpulse1} follow from the fact that $k_1-k^*_1\leq \Delta-1$. It can be seen that if~\eqref{condition} holds, then $V$ is not larger than the threshold at $k=k_1+\Gamma+1$. Repeating the above discussion as long as $V$ goes above the threshold line, we can get that, for each event time $k_i$, there exists a $k^*_i\in (k_i-\Delta, k_i]$ so that
\begin{equation}\label{k*i-1}
V(x(k^*_i))>a(1-b)^{k^*_i},
\end{equation}
\begin{equation}\label{k*i-2}
V(x(k^*_i-1))\leq a(1-b)^{k^*_i-1},
\end{equation}
and
\begin{equation}\label{k*i-3}
V(x(k))>a(1-b)^{k} \textrm{~for~all~} k\in[k^*_i,k_i].
\end{equation}
Moreover, we can conclude that~\eqref{condition} guarantees $V(x(k_i+\Gamma+1))\leq a(1-b)^{k_i+\Gamma+1}$ for all the event times $k_i$. Hence, trigger~\eqref{trigger} is valid with condition~\eqref{condition}, that is, $V$ stays on or below the threshold line right after each impulse.

Next, we show the attractivity of system~\eqref{CLsys}. For any $k\in [0,k^*_1-1]$, there is a $j\in\mathbb{N}$ so that $(j-1)\Delta \leq k <j\Delta$. We have $V(x((j-1)\Delta))\leq a (1-b)^{(j-1)\Delta}$, because $k_1$ is the first sampling moment when $V$ is above the threshold line. We then conclude that
\begin{align}\label{attractivity1}
V(x(k))&\leq c^{k-(j-1)\Delta} V(x((j-1)\Delta)) \cr
       &\leq c^{\Delta-1} a (1-b)^{(j-1)\Delta} \frac{(1-b)^{k-(j-1)\Delta}}{(1-b)^{k-(j-1)\Delta}} \cr
       &\leq \left(\frac{c}{1-b}\right)^{\Delta-1} a (1-b)^k
\end{align}
for all $k\in [0,k^*_1-1]$, where we used (A2) of Assumption~\ref{Assumption1} and the fact that $k-(j-1)\Delta\leq \Delta-1$. 

Similarly, for any $k\in [k_{i-1}+\Gamma+1,k^*_i-1]$ with $i>1$, there is a $j\in\mathbb{N}$ so that $(j-1)\Delta \leq k <j\Delta$. Let 
\[
\bar{k}=\max\left\{k_{i-1}+\Gamma+1,(j-1)\Delta\right\},
\]
then, $V(x(\bar{k}))\leq a (1-b)^{\bar{k}}$ and
\begin{align}\label{attractivity2}
V(x(k))&\leq c^{k-\bar{k}} V(x(\bar{k})) \cr
       &\leq c^{\Delta-1} a (1-b)^{\bar{k}} \frac{(1-b)^{k-\bar{k}}}{(1-b)^{k-\bar{k}}} \cr
       &\leq \left(\frac{c}{1-b}\right)^{\Delta-1} a (1-b)^k
\end{align}
for all $k\in [k_{i-1}+\Gamma+1,k^*_i-1]$, where we used the fact that $k-\bar{k}\leq \Delta-1$ in the last two inequalities.

For any $k\in[k^*_i,k_i+\Gamma]$ with $i\in\mathbb{N}$, we get from (A2) of Assumption~\ref{Assumption1} that
\begin{align}\label{attractivity3}
V(x(k)) &\leq c^{k-(k^*_i-1)} V(x(k^*_i-1)) \cr
        &\leq c^{k-k_i+k_i-(k^*_i-1)} a(1-b)^{k^*_i-1} \frac{(1-b)^{k-(k^*_i-1)}}{(1-b)^{k-(k^*_i-1)}} \cr
        &\leq \left(\frac{c}{1-b}\right)^{\Gamma+\Delta} a (1-b)^k.
\end{align}
Hence, we conclude from~\eqref{attractivity1},~\eqref{attractivity2}~and~\eqref{attractivity3} that
\begin{equation}\label{attractivity}
V(x(k))\leq \left(\frac{c}{1-b}\right)^{\Gamma+\Delta} a (1-b)^k \textrm{~~for~all~} k\geq 0,
\end{equation}
which implies the attractivity of system~\eqref{CLsys}.

Lastly, we show stability of system~\eqref{CLsys}. The fact that $x_0\in \mathcal{B}\left(\beta^{-1}(a)\right)$ and  Assumption~\ref{Assumption1}(A1) imply $V(x_0)\leq\beta(\|x_0\|)<a$. Then, we can get from the strict monotonicity of functions $V(x_0) c^t$ and $a(1-b)^t$ with $t\in\mathbb{R}^+$ that there exists a unique $t^*>0$ such that $V(x_0)c^{t^*}=a(1-b)^{t^*}$, which then implies that
\begin{equation}\label{defnition_t*}
t^*= \frac{\ln\left(\frac{a}{V(x_0)}\right)}{\ln\left(\frac{c}{1-b}\right)}.
\end{equation}
Note that $t^*$ may not be an integer. For $0\leq k \leq k^*_1-1$, we consider the following two scenarios.
\begin{itemize}
\item If $k\leq t^*$, then we get from (A2) of Assumption~\ref{Assumption1} that $V(x(k)) \leq V(x_0) c^k \leq V(x_0) c^{t^*}$.

\item If $k>t^*$, then we derive from~\eqref{attractivity1} that
\begin{align*}
V(x(k))&\leq \left(\frac{c}{1-b}\right)^{\Delta-1} a (1-b)^k \cr
       &<\left(\frac{c}{1-b}\right)^{\Delta-1} a(1-b)^{t^*}.
\end{align*}

\end{itemize}
Hence, we can conclude from the above two scenarios with the definition of $t^*$ that
\begin{equation}\label{k_1-1}
V(x(k))\leq \left(\frac{c}{1-b}\right)^{\Delta-1} a (1-b)^{t^*} \textrm{~for~all~} 0\leq k\leq k^*_1-1.
\end{equation}
For $k=k^*_1-1$, more precisely, we have the following estimations:
\begin{itemize}
\item if $k^*_1-1 \leq t^*$, Assumption~\ref{Assumption1}(A2) implies
\[
V(x(k^*_1-1))\leq V(x_0) c^{k^*_1-1} \leq V(x_0) c^{t^*};
\]
\item if $k^*_1-1 > t^*$, the definition of $k^*_1$ implies
 \[
 V(x(k^*_1-1))\leq a (1-b)^{k^*_1-1} \leq a (1-b)^{t^*}.
 \]
\end{itemize}
Thus,
\begin{equation}\label{k^*_1-1}
V(x(k^*_1-1)) \leq a (1-b)^{t^*}.
\end{equation}
For $k^*_1\leq k \leq k_1+\Gamma$, we can derive from (A2) of Assumption~\ref{Assumption1} and~\eqref{k^*_1-1} that
\begin{align}\label{k_1}
V(x(k)) \leq V(x(k^*_1-1)) c^{k-(k^*_1-1)} \leq a (1-b)^{t^*} c^{\Gamma+\Delta}.
\end{align}
For any $k>k_1+\Gamma$, we get from~\eqref{attractivity} and the fact $k_1+\Gamma>t^*$ that
\begin{align}\label{>k_1+Gamma}
V(x(k))&\leq \left(\frac{c}{1-b}\right)^{\Gamma+\Delta} a (1-b)^k \cr
       &\leq \left(\frac{c}{1-b}\right)^{\Gamma+\Delta} a (1-b)^{t^*}.
\end{align}

We then can conclude from~\eqref{k_1-1},~\eqref{k_1} and~\eqref{>k_1+Gamma} that
\begin{align}\label{stability}
\alpha(\|x(k)\|) &\leq V(x(k)) \cr
                 &\leq a(1-b)^{t^*} \left(\frac{c}{1-b}\right)^{\Gamma+\Delta} \cr
                 &  =  a \left(\frac{c}{1-b}\right)^{\Gamma+\Delta} \left( \frac{V(x_0)}{a} \right)^{{-\ln(1-b)}/{\ln\left(\frac{c}{1-b}\right)}} \cr
                 &\leq a \left(\frac{c}{1-b}\right)^{\Gamma+\Delta} \left( \frac{\beta(\|x_0\|)}{a} \right)^{{-\ln(1-b)}/{\ln\left(\frac{c}{1-b}\right)}}
\end{align}
for all $k\in\mathbb{N}$, where we used the definition of $t^*$ in the equality and (A1) of Assumption~\ref{Assumption1} in both the first and the last inequality. Therefore, for any $\varepsilon>0$, there exists 
\begin{align*}
\sigma=\beta^{-1}\left( a \left(  \frac{a \left(\frac{c}{1-b}\right)^{\Gamma+\Delta} }{\alpha(\varepsilon)}  \right)^{ {\ln\left( \frac{c}{1-b} \right)}/{\ln(1-b)}}   \right)
\end{align*}
such that, for any $\|x_0\|< \min\left\{\sigma,\beta^{-1}(a)\right\}$, we can establish from~\eqref{stability} that $\|x(k)\|<\varepsilon$ for all $k\in\mathbb{N}$, which implies stability of system~\eqref{CLsys}.

To show the lower bound of the inter-event times, we can observe that condition~\eqref{condition} ensures that the value of the Lyapunov function $V$ does not exceed the threshold right after each impulse, that is, $V(x(k_i+\Gamma+1))<a (1-b)^{k_i+\Gamma+1}$. Then, the next possible event time will be bigger than $k_i+\Gamma+1$. Therefore, $k_{i+1}>k_i+\Gamma+1$, i.e, $k_{i+1}-k_i\geq \Gamma+2$ for $i\in\mathbb{N}$.
\end{proof}

\begin{remark}\label{remarkth}
From the proof of Theorem~\ref{Th1}, we can see that parameter $b$ corresponds to the convergence speed of the Lyapunov function $V$. Setting $b$ large increases the speed of convergence at the cost of the event being triggered more frequently. Having $a$ large with fixed value of $b$ provides more time for the Lyapunov function $V$ to evolve from $V(x_0)$ to $V(x(k_1))$, that is, the first event triggering arrives later. However, the frequency of the event occurrence barely changes since the convergence speed of $V$ is unchanged. For impulsive control system~\eqref{CLsys}, parameter $a$ can be chosen large enough so that the initial state $x_0\in \mathcal{B}\left(\beta^{-1}(a)\right)$, which ensures the evolution of $V(x(k))$ starts below the threshold. Select a desired convergence rate $b$ for the threshold line, and then the impulsive control law $\mathbf{k}$ can be designed according to inequality~\eqref{condition} with Assumption~\ref{Assumption1}(A3). { It should be noted that the inter-event times are lower bounded by $\Gamma+2$, and the delay $\Gamma$ in the impulse allows the lower bound to be bigger than 2. However, a high control gain will be expected to compensate the delay effects in the impulses.}
\end{remark}

\section{Examples}\label{Sec5}
In this section, three examples are investigated to demonstrate our theoretical result.

\begin{example}\label{example1}
Consider the following positive scalar system
\begin{eqnarray}\label{nsys}
\left\{\begin{array}{ll}
x(k+1)=A_1 x(k) + A_2 \tanh(x(k)) + B u(k) \cr
x(0)=x_0
\end{array}\right.
\end{eqnarray}
where $x(k)\in\mathbb{R}$, $A_1=1.02$, $A_2=0.1$, and $B=1.5$. The control $u$ is in the form of~\eqref{pulse} with $\mathbf{k}(x)=K x$ and $\Gamma=1$, where constant $K$ is the feedback control gain, and event times $\{k_i\}_{i\in\mathbb{N}}$ are to be determined by trigger~\eqref{trigger} with Lyapunov function $V(x)=|x|$. To ensure positivity of system~\eqref{nsys}, we assume $A_1+BK>0$, that is, $K>-0.68$.
\end{example}

With $V(x)=|x|$, we have that (A1) of Assumption~\ref{Assumption1} is satisfied with $\alpha(s)=\beta(s)=s$ for $s\in\mathbb{R}^+$, and (A2) holds with $c=A_1+A_2=1.03$ because $f(x,u)=A_1x+A_2\tanh(x)+Bu$ and
\[
V(f(x,0))=|A_1x+A_2\tanh(x)|\leq (A_1+A_2) V(x).
\]
To verify~(A3) of Assumption~\ref{Assumption1}, we can observe that $g(x)=A_1x+A_2\tanh(x)$, and then
\begin{align*}
 V(f(g(x),Kx))
=& \left|A_1g(x)+A_2\tanh(g(x))+BKx\right| \cr
=& \mathrm{sgn}(x) \left(A_1g(x)+A_2\tanh(g(x))+BKx\right) \cr
\leq & \left((A_1+A_2)^2+BK\right) V(x)
\end{align*}
where $\mathrm{sgn}(\cdot)$ is the sign function, and the second equality follows from the fact that $A_1+BK>0$. Hence, (A3) holds with $\rho=(A_1+A_2)^2+BK$. We then can conclude from Theorem~\ref{Th1} that if 
\begin{equation}\label{pcondition}
-\frac{A_1}{B}<K\leq \frac{1}{B}\left( \frac{(1-b)^{\Delta+2}}{(A_1+A_2)^{\Delta}} -(A_1+A_2)^2 \right)
\end{equation}
then the closed-loop system is asymptotically stable. In the simulation, we select $K=-0.45$ and $\Delta=2$ so that~\eqref{pcondition} is satisfied. {Figure~\ref{fig1} shows the trajectories of the closed-loop system with different threshold parameters, and Figure~\ref{fig1u} illustrates the corresponding event-triggered impulsive control inputs.} The mechanism of trigger~\eqref{trigger} implies that each event time $k_i$ is a sampling moment, and $k_i+1$ is an impulse time as $\Gamma=1$. Then, condition~\eqref{condition} enforces $V(x(t))\leq a (1-b)^k$ at time $k_i+\Gamma+1=k_i+2$ which is a sampling time with $\Delta=2$. At the next sampling time $k=k_i+2\Delta$, it is possible for $V$ to be bigger than the threshold. We then can conclude that the inter-event times $\{k_{i+1}-k_i\}_{i\in\mathbb{N}}$ are lower bounded by $2\Delta=4$ which can also be observed in Figure~\ref{fig1.0}.

From the proof of Theorem~\ref{Th1}, we can see that parameter $b$ corresponds to the convergence speed of the Lyapunov function $V$. Setting $b$ large increases the speed of convergence at the cost of the event being triggered more frequently. To be more specific, the event occurs with a higher frequency for larger $b$. See Figure~\ref{fig1} for a comparison of system trajectories with different values of $b$. Having $a$ large with fixed value of $b$ provides more time for the Lyapunov function $V$ to evolve from $V(x_0)$ to $V(x(k_1))$, that is, the first event triggering arrives later. However, the frequency of the event occurrence barely changes since the convergence speed of $V$ is unchanged. See Table~\ref{table} for the demonstration of the above discussion. It can be seen that, over the same time interval, enlarging $b$ increases the number of event times dramatically, while this number decreases slightly with the increased value of $a$.

\begin{figure}
\centering
\subfloat[$a=5$ and $b=0.07$]{%
\resizebox*{7.8cm}{!}{\label{fig1.0}\includegraphics{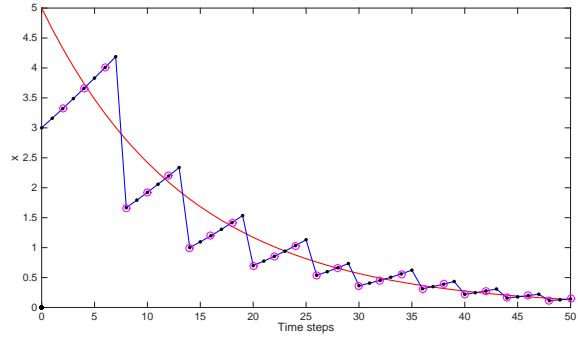}}}\hspace{5pt}
\subfloat[$a=5$ and $b=0.1$]{%
\resizebox*{8cm}{!}{\label{fig1.1}\includegraphics{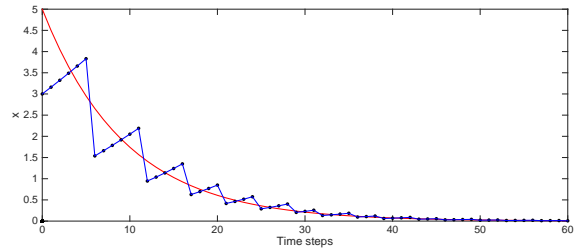}}}\hspace{5pt}
\subfloat[$a=5$ and $b=0.14$]{%
\resizebox*{8cm}{!}{\label{fig1.2}\includegraphics{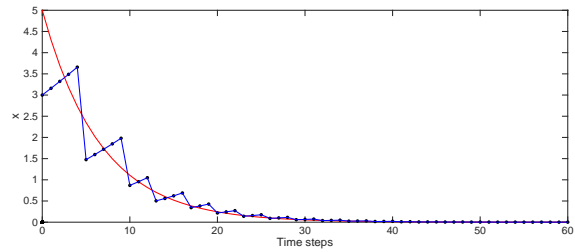}}}
\caption{Simulations of system~\eqref{nsys} with different parameters in the threshold. The red curve represents the threshold line. The values of $V$ at different time steps are indicated by the black dots, which are traced by the blue curves chronologically. The magenta-circled dots correspond to the values of $V$ at the sampling times, and the circled dots lying above the threshold line represent the $V$ values at the event times.} \label{fig1}
\end{figure}

\begin{figure}
\centering
\subfloat[$a=5$ and $b=0.07$]{%
\resizebox*{7.8cm}{!}{\label{fig1.0u}\includegraphics{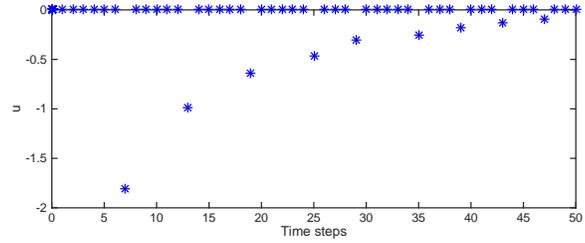}}}\hspace{5pt}
\subfloat[$a=5$ and $b=0.1$]{%
\resizebox*{8cm}{!}{\label{fig1.1u}\includegraphics{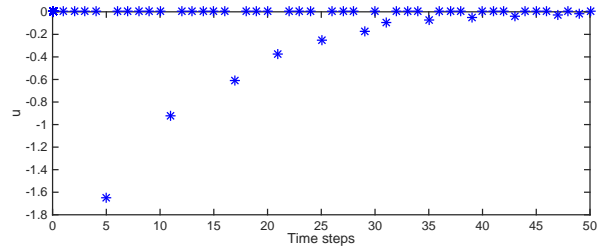}}}\hspace{5pt}
\subfloat[$a=5$ and $b=0.14$]{%
\resizebox*{8cm}{!}{\label{fig1.2u}\includegraphics{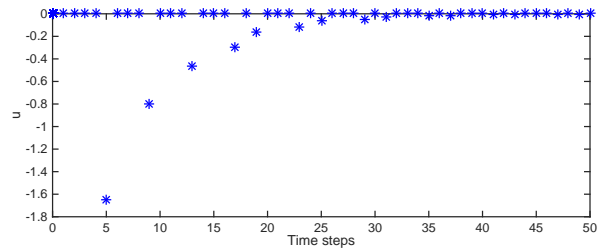}}}
\caption{Simulations of the event-triggered impulsive control inputs for system~\eqref{nsys} with different parameters in the threshold.} \label{fig1u}
\end{figure}

\begin{table}
\tbl{Number of the Event Times on the Time Interval $[0,3000]$.}
{\begin{tabular}{cccc} \toprule
~~~~$a$~~~~ & ~~~~$b$~~~~ &  \multicolumn{2}{c}{~~~Number of event times~~~}  \\ \midrule
$5$ & 0.04 & \hskip9mm 592 & 595 \\
$5$ & 0.07 & \hskip9mm 716 & 719 \\
$24$ & 0.07 & \hskip9mm 713 & 715 \\ \bottomrule
\end{tabular}}
\tabnote{The first column under the category `Number of event times' is for system~\eqref{nsys} with initial condition $x(0)=0.1$, while the second column is derived with initial condition $x(0)=3$.}
\label{table}
\end{table}

\begin{example}\label{example2}
We discuss the linear case of system~\eqref{sys}
\begin{eqnarray}\label{lsys}
\left\{\begin{array}{ll}
x(k+1)=A x(k) + B u(k) \cr
x(0)=x_0
\end{array}\right.
\end{eqnarray}
where $x(k)\in\mathbb{R}^n$, $A\in\mathbb{R}^{n\times n}$, and $B\in\mathbb{R}^{n\times m}$. The control $u$ is in the form of~\eqref{pulse} with $\mathbf{k}(x)=K x$ and $\Gamma\geq 0$, where matrix $K\in\mathbb{R}^{m\times n}$ is the control gain, and event times $\{k_i\}_{i\in\mathbb{N}}$ are to be determined by trigger~\eqref{trigger} with Lyapunov function $V(x)=\|x\|$ and sampling period $\Delta=1$.
\end{example}

It can be seen that (A1) of Assumption~\ref{Assumption1} holds with $\alpha(s)=\beta(s)=s$ for $s\in\mathbb{R}^+$, and (A2) is satisfied with $c=\|A\|$ since
\[
V(f(x,0))=V(Ax)=\|Ax\|\leq \|A\|V(x).
\]
For (A3) of Assumption~\ref{Assumption1}, we have $g(x)=Ax$ and $g^{\Gamma}(x)=A^{\Gamma}x$, then
\begin{align*}
V(f(g^{\Gamma}(x),\mathbf{k}(x))) & = V\left(AA^{\Gamma}x+ BKx\right) \cr
        &\leq \left\|A^{\Gamma+1}+BK\right\| V(x),
\end{align*}
which implies that (A3) is true with $\rho= \|A^{\Gamma+1}+BK \|$. Therefore, we conclude from Theorem~\ref{Th1} that if
\begin{align}\label{lcondition}
\left\|A^{\Gamma}+BK\right\| \leq \eta
\end{align}
holds with $\eta={(1-b)^{\Gamma+2}}/{\|A\|}$, then the closed-loop system is asymptotically stable for any $x_0\in\mathcal{B}(\beta^{-1}(a))$. In the simulation, we consider the following parameters 
\[
A=\begin{bmatrix}
0.1 & 1.2 \\
0.007 & 1.05
\end{bmatrix} \textrm{~~and~~} B=\begin{bmatrix}
300 & 200 \\
0.5 & 0.001
\end{bmatrix}
\]
from \cite{AE-VD-KK:2010} with $a=5$ and $b=0.05$. The control gain $K$ can be readily obtained by solving the linear matrix inequality (LMI) equivalent to~\eqref{lcondition} with the help of the LMI Toolbox in MATLAB, once the delay $\Gamma$ is specified. Nevertheless, in order to demonstrate the delay effects on the dynamics of the closed-loop system, we consider a particular control gain
\[
K=\begin{bmatrix}
0 & -2 \\
0 & ~3
\end{bmatrix}
\]
instead, so that~\eqref{lcondition} is satisfied with $\Gamma=0$, $1$, and $2$. See Figure~\ref{fig2} for numerical simulations. {Intuitively, larger actuation delay $\Gamma$ allows the Lyapunov function to go beyond the triggering threshold further, and then, under the same feedback control law, the event is triggered with a higher frequency. It can be observed in Figure~\ref{fig2} that larger $\Gamma$ leads to more frequent occurrence of the event.}

\begin{figure}
\centering
\subfloat[$\Gamma=0$]{%
\resizebox*{8cm}{!}{\label{fig2.0}\includegraphics{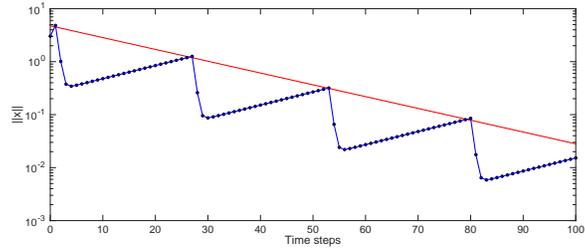}}}\hspace{5pt}
\subfloat[$\Gamma=1$]{%
\resizebox*{8cm}{!}{\label{fig2.1}\includegraphics{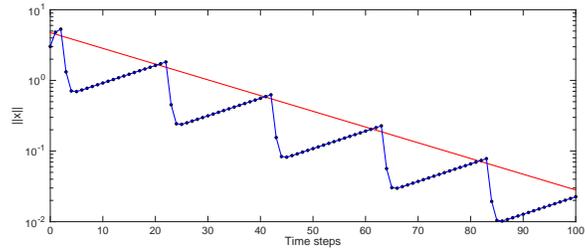}}}\hspace{5pt}
\subfloat[$\Gamma=2$]{%
\resizebox*{8cm}{!}{\label{fig2.2}\includegraphics{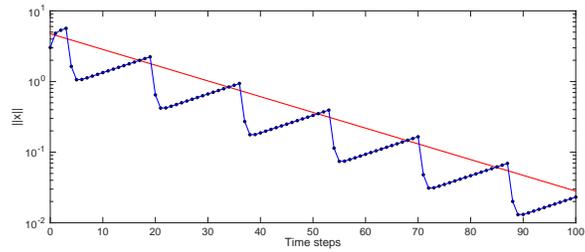}}}
\caption{Simulations of system~\eqref{lsys} with different impulse delays. The red curves represent the threshold lines with $a=5$ and $b=0.05$. The values of $V$ at different time steps are indicated by the black dots, which are traced by the blue curves chronologically. To clearly observe when $V$ passes the threshold line, the $\|x\|$-axes in the above plots are scaled logarithmically.} 
\label{fig2}
\end{figure}

{
\begin{example}\label{example3}
Consider the following nonlinear system
\begin{eqnarray}\label{network}
x(k+1)=C x(k) +A \mathcal{F}(x(k))+ B u(k),
\end{eqnarray}
where $x(k)=[x_1(k),x_2(k),...,x_n(k)]^\top\in\mathbb{R}^n$, $u(k)\in\mathbb{R}^m$, $C=\textrm{diag}(c_1,c_2,...,c_n)$ with constants $0<|c_i|<1$ for $i=1,2,...,n$, matrices $A\in\mathbb{R}^{n\times n}$ and $B\in\mathbb{R}^{n\times m}$, and $\mathcal{F}(x(k))=[f_1(x_1(k)),f_2(x_2(k)),...,f_n(x_n(k)) ]^\top$ with functions $f_i$ satisfying Lipschitz conditions, that is, there exists $l_i>0$ such that $|f_i(y)-f_i(z)|\leq l_i|y-z|$ for any $y,z\in\mathbb{R}^n$ and $i=1,2,...,n$. Denote $L=\textrm{diag}(l_1,l_2,...,l_n)$.
\end{example}

In this example, we consider $\Gamma=0$ and the state feedback control $u(k)=Kx(k)$ where the control gain $K$ is an $m\times n$ matrix. The event times $\{k_i\}_{i\in\mathbb{N}}$ are to be determined by \eqref{trigger} with Lyapunov function $V(x)=x^\top x$.

When $k\not=k_i$, we have
\begin{align*}
         &V(x(k+1))\cr
         &= x^\top(k) C^2 x(k) + x^\top(k)CA\mathcal{F}(x(k)) + F^\top(x(k))A^\top Cx(k) +\mathcal{F}^\top (x(k)) A^\top A\mathcal{F}(x(k))\cr
         &\leq 2x^\top(k) \left( C^2+\|A\|^2 L^2 \right)x(k),
\end{align*}
and then condition (A2) in Assumption~\ref{Assumption1} is satisfied with 
\[c=2\left( \max_{i}\{c_i^2\}  + \|A\|^2\max_{i}\{l^2_i\}\right).\]

Similarly, for $k=k_i+1$ we get 
\begin{equation}\label{ex3-1}
V(x(k_i+1))\leq 2 x^\top(k_i)\left( (C+BK)^\top(C+BK) +\|A\|^2 L^2 \right) x(k_i).
\end{equation}
If there exists a $\rho>0$ such that
\begin{equation}\label{ex3-2}
\begin{bmatrix}
I & C+BK\cr
C^\top + K^\top B^\top & \frac{1}{2}\rho I -\|A\|^2L^2 
\end{bmatrix}\geq 0,
\end{equation}
where $I$ denotes the $n\times n$ identity matrix, then the Schur complement with~\eqref{ex3-1} and~\eqref{ex3-2} implies that condition (A3) of Assumption~\ref{Assumption1} holds. Hence, all the conditions of Assumption~\ref{Assumption1} are satisfied. If we further assume that~\eqref{condition} holds, then the event-triggered control system~\eqref{network} is asymptotically stable.

It should be mentioned that the above stability analysis on event-triggered control system~\eqref{network} has wide applications on stabilization and synchronization problems of discrete-time Hopfield neural networks (see, e,g, \cite{XL-TC:2002,ANM-JAF-HFS:1990}), and the proposed event-triggered impulsive control can dramatically reduce the energy assumption due to the neuron communication, since the lower bound of the inter-event times is not less than $\Gamma+2$.

}

\section{Conclusions}\label{Sec6}
This paper studied delayed impulsive stabilization of discrete-time systems. A new periodic event-triggering scheme with two adjustable parameters was designed to determine the moments of updating the control inputs. Sufficient conditions on the parameters, the sampling period, and time delays were derived to ensure asymptotic stability of the closed-loop systems. Three examples were provided to demonstrate the theoretical result. Along the line of this research, the extension of the proposed event-triggering scheme for discrete-time delay systems deserves future investigation. {Extensions can also be made to singular discrete-time systems (see, e.g., \cite{YH-YK-CG:2017}). The destabilization effects of the impulse delays were analyzed in this study. However, it has been shown that the delays in the impulse can contribute to the stabilization of the impulsive control systems (see, e.g, \cite{XL-SS:2016}). Therefore, the future research can also focus on the positive delay effects on the stabilization of discrete-time systems via event-triggered impulsive control. }

%
%
%

%
\section*{Funding}

This work was supported by the Natural Sciences and Engineering Research Council of Canada (NSERC) under grants RGPIN-2020-03934 and RGPIN-2022-03144.

%
%
%
%
%
%
%


\begin{thebibliography}{}
{
%
\bibitem[Bruneel and Kim, 1993]{HB-BGK:1993}
Bruneel, H. \&  Kim, B.G. (1993). \textit{Discrete-Time Models for Communication Systems including ATM}. Boston, MA: Kluwer.
 
\bibitem[Chen {\em et al.}, 2015]{WHC-XL-WXZ:2015}
Chen, W.-H.,  Lu, X., \&  Zheng, W.X. (2015). Impulsive stabilization and impulsive synchronization of discrete-time delayed neural networks. \textit{IEEE Transactions on Neural Networks and Learning Systems}, 26(4), 734-748.

\bibitem[Eqtami {\em et al.}, 2010]{AE-VD-KK:2010}
 Eqtami, A.,  Dimarogonas, V., \&  Kyriakopoulos, K. (2010). Event-triggered control for discrete-time systems. \textit{Proceedings of the 2010 American Control Conference}, Baltimore, MD, USA (pp. 4719-4724).
 
 {
\bibitem[Gommans and Heemels, 2015]{TMPG-WPMH:2015}
 Gommans, T.M.P., \& Heemels, W.P.M.H. (2015). Resource-aware MPC for constrained nonlinear systems: A self-triggered control approach. \emph{Systems \& Control Letters}, 79, 59-67.
 

{
\bibitem[Han {\em et al.}, 2017]{YH-YK-CG:2017}
Han, Y., Kao, Y., \& Gao, C. (2017). Robust sliding mode control for uncertain discrete singular systems with time-varying delays and external disturbances. \textit{Automatica}, 75, 210-216.} 
 

\bibitem[Heemels {\em et al.}, 2012]{WPMHH-KHJ-PT:2012}
Heemels, W.P.M.H., Johansson, K.H., \& Tabuada, P. (2012). An introduction to event-triggered and self-triggered control. In \emph{the 51st IEEE Conference on Decision and Control}, Maui, HI, USA (pp. 3270-3285).
 
\bibitem[Henriksson {\em et al.}, 2012]{EH-DEQ-HS-KHJ:2012}
Henriksson, E., Quevedo, D.E., Sandberg, H., \& Johansson, K.H. (2012). Self-triggered model predictive control for network scheduling and control. In \emph{the 8th IFAC Symposium on Advanced Control of Chemical Processes}, Singapore (pp. 432-438).}
\bibitem[Guan and Liu, 2010]{ZHG-NL:2010}
Guan, Z.-H. \&  Liu, N. (2010). Generating chaos for discrete time-delayed systems via impulsive control. \textit{Chaos}, 20, 013135.

 
\bibitem[Li {\em et al.}, 2020]{HL-JF-XL-LR-TH:2020}
 Li, H.,  Fang, J.,  Li, X.,  Rutkowski, L., \&  Huang, T. (2020). Event-triggered impulsive synchronization of discrete-time coupled neural networks with stochastic perturbations and multiple delays. \textit{Neural Networks}, 132, 447-460.
 
{
\bibitem[Li and Song, 2016]{XL-SS:2016}
Li, X. \& Song, S. (2016). Stabilization of delay systems: delay-dependent impulsive control. \textit{IEEE Transactions on Automatic Control}, 62(1), 406-411.}  
 
\bibitem[Liu and Hill, 2014]{BL-DJH:2014}
 Liu, B. \&  Hill, D.J. (2014). Stability via hybrid-event-time Lyapunov function and impulsive stabilization for discrete-time delayed switched systems. \textit{SIAM Journal on Control and Optimization}, 52(2), 1338-1365.
 

\bibitem[Liu {\em et al.}, 2018-1]{BL-DJH-ZS:2018}
 Liu, B., Hill, D.J., \&  Sun, Z. (2018-1). Input-to-state exponents and related ISS for delayed discrete-time systems with application to impulsive effects. \textit{International Journal of Robust and Nonlinear Control}, 28(2), 640-660.

\bibitem[Liu {\em et al.}, 2019]{BL-DJH-ZS-JH:2019}
 Liu, B.,  Hill, D.J.,  Sun, Z., \&  Huang, J. (2019). Event-triggered control via impulses for exponential stabilization of discrete-time delayed systems and networks. \textit{International Journal of Robust Nonlinear Control}, 29(6), 1613-1638.

\bibitem[Liu {\em et al.}, 2018-2]{BL-DJH-CZ-ZS:2018}
 Liu, B.,  Hill, D.J.,  Zhang, C., \&  Sun, Z. (2018-2). Stabilization of discrete-time dynamical systems under event-triggered impulsive control with and without time-delays. \textit{Journal of Systems Science and Complexity}, 31, 130-146.
 
\bibitem[Liu and Marquez, 2007]{BL-HJM:2007}
 Liu, B. \&  Marquez, H. J. (2007). Quasi-exponential input-to-state stability for discrete-time impulsive hybrid systems. \textit{International Journal of Control}, 80(4), 540-554.
 
\bibitem[Liu and Zhang, 2019]{XL-KZ:2019}
 Liu, X. \&  Zhang, K. (2019). \textit{Impulsive Systems on Hybrid Time Domains}. Switzerland: Springer.


{
\bibitem[Liu and Chen, 2002]{XL-TC:2002}
Liu, X. \& Chen, T. (2002). A new result on the global convergence of Hopfield neural networks. \textit{IEEE Transactions on Circuits and Systems I: Fundamental Theory and Applications}, 49(10), 1514-1516.}

 
 {
\bibitem[Mazo {\em et al.}, 2010]{MM-AA-PT:2010}
Mazo, M., Anta, A., \& Tabuada, P. (2010). An ISS self-triggered implementation of linear controllers. \emph{Automatica}, 46(8), 1310-1314.}


{
\bibitem[Michel  {\em et al.}, 1990]{ANM-JAF-HFS:1990}
Michel, A.N., Farrell, J.A., \& Sun, H.F. (1990). Analysis and synthesis techniques for Hopfield type synchronous discrete time neural networks with application to associative memory. \textit{IEEE Transactions on Circuits and Systems}, 37(11), 1356-1366.
}

\bibitem[Ogata, 1995]{KO:1995}
Ogata, K. (1995). \textit{Discrete-Time Control Systems} (2nd ed.). Englewood Cliffs, NJ: Prentice-Hall.

{
\bibitem[Postoyan {\em et al.}, 2014]{RP-PT-DN-AA:2014}
Postoyan, R., Tabuada, P., Ne\v si\'c, D., \& Anta, A. (2014). A framework for the event-triggered stabilization of nonlinear systems. \emph{IEEE Transactions on Automatic Control}, 60(4), 982-996.
}

\bibitem[Sarangapani, 2006]{JS:2006}
 Sarangapani, J. (2006). \textit{Neural Network Control of Nonlinear Discrete-Time Systems}. Boca Raton, FL: CRC Press.

 


\bibitem[Zhang  {\em et al.}, 2022]{KZ-EB:2022}
Zhang, K. \& Braverman, E. (2022). Event-triggered impulsive control for nonlinear systems with actuation delays. \textit{IEEE Transactions on Automatic Control}. Early access. doi: 10.1109/TAC.2022.3142127

{
\bibitem[Zhang  {\em et al.}, 2023]{KZ-EB-BG:2023}
Zhang, K., Braverman, E., \& Gharesifard, B. (2023). Event-triggered control for discrete-time delay systems. \textit{Automatica}, 147, 110688.
 }
 
\bibitem[Zhang {\em et al.}, 2017]{PZ-TL-ZPJ:2017}
 Zhang, P.,  Liu, T., \&  Jiang, Z.-P. (2017). Input-to-state stabilization of nonlinear discrete-time systems with event-triggered controllers. \textit{Systems \& Control Letters}, 103, 16-22.

\bibitem[Zhang {\em et al.}, 2009]{YZ-JS-GF:2009}
 Zhang, Y.,  Sun, J., \&  Feng, G. (2009). Impulsive control of discrete systems with time delay. \textit{IEEE Transactions on Automatic Control},  54(4), 830-834.
 }
\end{thebibliography}
\end{document}